\newcommand{\vect}[1]{\boldsymbol{#1}}
\newtheorem{lemma}{Lemma}[section]
\newtheorem{corollary}[lemma]{Corollary}
\newtheorem{proposition}[lemma]{Proposition}
\newtheorem{theorem}[lemma]{Theorem}
\newtheorem{question}[lemma]{Question}
\newtheorem{example}[lemma]{Example}
\newtheorem{definition}[lemma]{Definition}
\newtheorem{remark}[lemma]{Remark}
\title[Generators of a fraction of a numerical semigroup]{Generators of a fraction of a numerical semigroup}
\author{Alessio Moscariello}
\subjclass[2010]{20M14}
\keywords{Numerical semigroup; embedding dimension}
\address[Alessio Moscariello]{Dipartimento di Matematica e Informatica, \ Universit\`a di Catania, \  Viale Andrea Doria 6, 
95125 Catania,Italy.}
\email{alessio.moscariello@studium.unict.it}
\address[Alessio Moscariello]{Scuola Superiore di Catania, \ Universit\`a di Catania, \  Via Valdisavoia 9, 
95123 Catania, Italy.}
\begin{document}

\maketitle
\begin{abstract}
Given a numerical semigroup $S$ and a positive integer $d$, the fraction  $\frac{S}{d}=\{ x \in \mathbb{N} \ | \ dx \in S\}$ is again a numerical semigroup. 
In this paper we determine a generating set for $\frac{S}{d}$ in terms of the minimal generators of $S$ and provide sharp upper bounds for the embedding dimension of $\frac{S}{d}$.

\end{abstract}
\section*{Introduction}
A numerical semigroup is an additive submonoid of $\mathbb{N}$ with finite complement in it. Numerical semigroups have been widely studied in the recent decades, because of their applications in algebraic geometry, number theory and coding theory.
In 2006 Rosales and Urbano-Blanco introduced the following construction: given a numerical semigroup $S$ and a positive integer $d$, the set 
$$
\frac{S}{d}=\{ x \in \mathbb{N} \ | \ dx \in S \}
$$
is again a numerical semigroup, called fraction or quotient of $S$ by $d$ (cf. \cite{RU}). The behavior of particular classes of semigroups with respect to this operation has been investigated, yielding surprising results: 
for instance, while every semigroup is a half of infinitely many symmetric numerical semigroup (cf. \cite{RG2}), only irreducible semigroups are halves of pseudo symemtric numerical semigroups (cf. \cite{R}); these results have been generalized by Swanson (cf. \cite{S}). Moreover, while Arf and saturated numerical semigroups are stable under taking fractions, semigroups of maximal embedding dimension are not (cf. \cite{DoS}). The construction appears in relation to problems in various areas of mathematics, including the study of graded free resolutions (cf. \cite{LLR}, \cite{MO}), Diophantine inequalities (cf. \cite{RR}, \cite{RU}),  $C^*$-algebras (cf. \cite{T}) and algebroid branches (cf. \cite{DS}).

Despite its ubiquity, the construction is not well understood. In particular, it is an open problem to relate the invariants of $\frac{S}{d}$ to those of $S$ (cf. \cite{DGR}). The aim of this paper is to study the generators of $\frac{S}{d}$. In Proposition \ref{gen} we determine a (non-minimal) generating set of $\frac{S}{d}$ in terms of the minimal generators of $S$. To this purpose we introduce a new combinatorial object, $d$-partitions, which plays the role of the partition of an integer in the context of modular arithmetic; this object may be of interest on its own (cf. Question \ref{prob}). The main consequences of our result are two sharp upper bounds for the embedding dimension of $\frac{S}{d}$, one depending on a partition of the minimal generating set of $S$ and the other depending only on the embedding dimension of $S$. Finally we apply our results to proportionally modular Diophantine inequalities and to give an alternative proof of the main result of \cite{RG2}.
\section{Preliminaries and $d$-partitions}
Let $\mathbb{N}$ be the set of non-negative integers. A \textbf{numerical semigroup} is a submonoid $S$ of $(\mathbb{N},+)$ such that $\mathbb{N} \setminus S$ is finite. Each numerical semigroup admits a unique minimal set of generators $G=\{g_1 < g_2 < \dots < g_k \}$, and the cardinality of this set of generators is called the \textbf{embedding dimension} and denoted with $\nu(S)$. Given a numerical semigroup $S$ and $d \in \mathbb{N} \setminus \{0\}$, the \textbf{fraction} $\frac{S}{d}$ is the numerical semigroup $$\frac{S}{d}:= \{x \in \mathbb{N} \ | \ dx \in S \}.$$
 
Now we introduce a new notion, whose scope is to provide a set of ``smallest" sequences of integers whose sum is a multiple of a given integer $d$.
\begin{definition}\label{dpart}
Let $d \in \mathbb{N} \setminus \{0\}$. A \textbf{$d$-partition} is a sequence of integers $\vect{\lambda} =(\lambda_1, \lambda_2,\dots,\lambda_m)$, where $\lambda_1 \le \dots \le \lambda_m$ satisfy the following condition:
\begin{enumerate}
\item $0 \le \lambda_i \le d-1$ for every $i=1,\dots,m$;
\item $\lambda_1+\dots+\lambda_m \equiv 0 \pmod{d}$;
\item There is no subsequence $\{\lambda_{i_1},\dots,\lambda_{i_k}\}$ of $\vect{\lambda}$ such that $\lambda_{i_1}+\dots+\lambda_{i_k} \equiv 0 \pmod{d}$.
\end{enumerate}
Denote the set of $d$-partitions with $\mathcal{P}(d)$.
\end{definition}
Obviously $(0) \in \mathcal{P}(d)$ for all $d \in \mathbb{N} \setminus \{0\}$. The partition $(0)$ is called the \textbf{trivial $d$-partition.}
Note also that if $\vect{\lambda}=(\lambda_1,\dots,\lambda_m) \in \mathcal{P}(d)$ and $\vect{\lambda} \neq (0)$ then $\lambda_i \neq 0$ for all $i=1,\dots,m$.
\begin{example}
If $d=1$ then the first condition forces $\mathcal{P}(1)=\{(0)\}$. For $d=2$ we obtain that, apart from the trivial $2$-partition, the only other sequence satisfying the conditions is $(1,1)$, therefore $\mathcal{P}(2)=\{(0),(1,1)\}$. For $d=3$ there are the three non-trivial sequences $(1,1,1),(1,2),(2,2,2)$, hence we have $\mathcal{P}(3)=\{(0),(1,1,1),(1,2),(2,2,2)\}$.
\end{example}
We immediately notice that if $(\lambda_1,\dots,\lambda_m)$ is a $d$-partition then $(d-\lambda_1,\dots,d-\lambda_m)$ is also a $d$-partition.
In the following we denote by $[m]_n$ (where $m,n \in \mathbb{N}, \ n > 0$) the euclidean remainder of the division of $m$ by $n$, i.e.   $$[m]_n=\min\{i \in \mathbb{N} \ \ |  \ i \equiv m \pmod n\}$$ 
We use this notation to see that the length of a $d$-partition is bounded above by $d$:
\begin{proposition}\label{pigeons}
Let $(\lambda_1,\dots,\lambda_m) \in \mathcal{P}(d)$. Then $m \le d$.
\end{proposition}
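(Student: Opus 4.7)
The label of the proposition (\texttt{pigeons}) already telegraphs the strategy: this is a pigeonhole argument on partial sums. The plan is to consider the $m$ partial sums
\[
s_0 = 0, \quad s_1 = \lambda_1, \quad s_2 = \lambda_1 + \lambda_2, \quad \dots, \quad s_{m-1} = \lambda_1 + \dots + \lambda_{m-1},
\]
and reduce them modulo $d$. If $m \ge d+1$, then I would have $m$ residues in the set $\{0, 1, \dots, d-1\}$, and since $m > d$, the pigeonhole principle forces $s_i \equiv s_j \pmod{d}$ for some $0 \le i < j \le m-1$.

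From such a coincidence I would extract the subsequence $(\lambda_{i+1}, \lambda_{i+2}, \dots, \lambda_j)$, which is nonempty (as $i<j$) and whose sum $s_j - s_i$ is divisible by $d$. Since $j \le m-1$, this subsequence omits at least the term $\lambda_m$, hence it is a proper subsequence of $\vect{\lambda}$. This directly violates condition (3) of Definition \ref{dpart}, giving the desired contradiction and forcing $m \le d$.

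The only subtle point is the interpretation of condition (3): read literally, the full sequence itself would violate the ``no subsequence'' clause because of condition (2), so condition (3) must be understood to refer to proper (nonempty) subsequences. Once that reading is fixed, the argument is essentially a one-liner and there is no real obstacle; the cleverness lies entirely in choosing to look at the $m$ prefix sums $s_0, \dots, s_{m-1}$ rather than all $m+1$ sums including $s_m$, so that a pigeonhole collision automatically produces a proper subsequence.
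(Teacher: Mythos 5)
Your proof is correct and is essentially the paper's own argument: the paper applies the pigeonhole principle to the $m$ prefix sums $\Sigma_k = \bigl[\sum_{i=1}^{k}\lambda_i\bigr]_d$ for $k=1,\dots,m$ (rather than your $s_0,\dots,s_{m-1}$), obtaining indices $1 \le i < j \le m$ with $\Sigma_i = \Sigma_j$ and hence a subsequence $(\lambda_{i+1},\dots,\lambda_j)$ summing to $0$ modulo $d$, which by the same token is automatically proper since it omits $\lambda_1,\dots,\lambda_i$. Your side remark that condition (3) must be read as referring to proper subsequences is a fair observation about the definition, but it does not change the substance of the argument.
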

\begin{proof}
Consider the sequence $\vect{\Sigma}=(\Sigma_1,\dots,\Sigma_m)$ where $$\Sigma_k:=\left[\sum_{i=1}^{k} \lambda_i \right]_d.$$
Suppose that $m > d$. Since $0 \le \Sigma_k \le d-1$ for any $k \in \mathbb{N}$ then by the Pigeonhole Principle there exist two integers $1 \le i < j \le m$ such that $\Sigma_i=\Sigma_j$. Thus $\Sigma_j-\Sigma_i \equiv \lambda_{i+1}+\dots+\lambda_j \equiv 0 \pmod{d}$, contradicting the definition of $d$-partition.
\end{proof}
We next show that every sequence of integers satisfying the first two conditions of Definition \ref{dpart} can be decomposed in $d$-partitions.
\begin{proposition}
Let $d \in \mathbb{N} \setminus \{0\}$ and $(\lambda_1,\dots,\lambda_m)$ be a sequence of integers such that $0 \le \lambda_i < d$ and $$\lambda_1+\dots+\lambda_m \equiv 0 \pmod{d}.$$ Then this sequence is the union of elements of $\mathcal{P}(d)$.
\end{proposition}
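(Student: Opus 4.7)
The plan is to proceed by strong induction on the length $m$ of the sequence. In the base case $m=0$, the empty sequence is (vacuously) the empty union of elements of $\mathcal{P}(d)$, and in the case $m=1$, the condition $\lambda_1 \equiv 0 \pmod{d}$ together with $0 \le \lambda_1 < d$ forces $\lambda_1 = 0$, which is itself the trivial $d$-partition.

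For the inductive step, I would consider a sequence $(\lambda_1, \dots, \lambda_m)$ satisfying the two hypotheses and ask whether it is itself a $d$-partition. If it is, there is nothing to prove. Otherwise, since conditions (1) and (2) of Definition \ref{dpart} are given to hold, condition (3) must fail; that is, there exists a proper nonempty subsequence $(\lambda_{i_1}, \dots, \lambda_{i_k})$ whose sum is divisible by $d$. (Here ``proper'' is essential and is the intended reading of (3), as the whole sequence is ruled out to avoid clashing with (2).) Let $J = \{i_1, \dots, i_k\}$ and $J^c = \{1, \dots, m\} \setminus J$; both are nonempty and strictly smaller than $m$. The subsequence indexed by $J$ still satisfies the hypotheses, and the complementary subsequence indexed by $J^c$ also does so, since
\[
\sum_{i \in J^c} \lambda_i \;=\; \sum_{i=1}^m \lambda_i \;-\; \sum_{i \in J} \lambda_i \;\equiv\; 0 \pmod{d}.
\]
Applying the inductive hypothesis to each of the two shorter sequences writes both of them as unions of elements of $\mathcal{P}(d)$, and gluing these decompositions together yields the desired decomposition of the original sequence.

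I do not expect a serious obstacle: the argument is essentially the standard ``minimal zero-sum sequence'' decomposition from additive combinatorics, and the only point requiring care is the correct reading of condition (3) (proper subsequences) so that a non-$d$-partition automatically produces a nontrivial splitting. A minor cosmetic issue is how to treat zero entries, but these are simply absorbed as copies of the trivial $d$-partition $(0)$ and cause no trouble.
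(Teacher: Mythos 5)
Your proof is correct and follows essentially the same route as the paper: split off a proper zero-sum subsequence when the sequence is not already a $d$-partition, observe that the complement still satisfies the hypotheses, and recurse (the paper phrases the recursion as infinite descent rather than strong induction, and is slightly less explicit about reading condition (3) as referring to proper subsequences).
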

\begin{proof}
If there is no subsequence of $\vect{\lambda}=(\lambda_1,\dots,\lambda_m)$ whose sum equals $0$ modulo $d$ then $(\lambda_1,\dots,\lambda_m) \in \mathcal{P}(d)$, thus the thesis holds.\\ If there exist such a subsequence $(\lambda_{i_1},\dots,\lambda_{i_k})$ then the sequence $\vect{\lambda}$ can be splitted into the two subsequences $(\lambda_{i_1},\dots,\lambda_{i_k})$ and $\vect{\lambda} \setminus (\lambda_{i_1},\dots,\lambda_{i_k})$. The two sequences obtained are shorter than the original one, therefore considering those sequences the thesis follow by infinite descent.
\end{proof}
\begin{corollary}\label{split}
Let $d \in \mathbb{N} \setminus \{0\}$ and let $(a_1,\dots,a_m)$ be a sequence of integers such that $$a_1+\dots+a_m \equiv 0 \pmod{d}.$$
Then the sequence $([a_1]_d,\dots,[a_m]_d)$ can be splitted into elements of $\mathcal{P}(d)$.
\end{corollary}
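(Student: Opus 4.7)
The plan is to derive this corollary directly from the preceding proposition by verifying that the sequence $([a_1]_d, \dots, [a_m]_d)$ meets the two hypotheses of that proposition, namely that each term lies in $\{0, 1, \dots, d-1\}$ and that the sum of the terms is divisible by $d$. Once this is confirmed, the previous proposition supplies the desired decomposition into $d$-partitions, and no further argument is needed.

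The verification is essentially by definition. For the first hypothesis, the Euclidean remainder satisfies $0 \le [a_i]_d \le d-1$ for every $i$, which is immediate from the definition of $[\,\cdot\,]_d$ given just before Proposition \ref{pigeons}. For the second hypothesis, the congruence $[a_i]_d \equiv a_i \pmod{d}$ for each $i$ implies
\[
\sum_{i=1}^{m} [a_i]_d \;\equiv\; \sum_{i=1}^{m} a_i \;\equiv\; 0 \pmod{d},
\]
where the last congruence uses the hypothesis of the corollary. Thus the sequence $([a_1]_d, \dots, [a_m]_d)$ meets the hypotheses of the previous proposition and can therefore be partitioned into elements of $\mathcal{P}(d)$.

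There is no genuine obstacle here: the corollary is the natural restatement of the previous proposition for an arbitrary sequence of integers (not necessarily already reduced modulo $d$), and the only content is the observation that reducing modulo $d$ preserves both the required range condition and the divisibility of the sum. The trivial $d$-partition $(0)$ accounts for any indices $i$ with $[a_i]_d = 0$ in the resulting decomposition.
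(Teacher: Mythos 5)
Your proof is correct and matches the paper's intent: the paper states this corollary without proof, treating it as an immediate consequence of the preceding proposition, and your verification of the two hypotheses (range of the Euclidean remainder and preservation of the sum modulo $d$) is exactly the implicit argument.
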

For a given set $A$, denote by $|A|$ its cardinality.
The last definition we give is the following:
\begin{definition}
Given a $d$-partition $\vect{\lambda}=(\lambda_1,\dots,\lambda_m)$ we define the \textbf{enumeration function} $\varphi_{\vect{\lambda}} : \{0,1,\dots,d-1\} \rightarrow \mathbb{N}$ as follows:
$$\varphi_{\vect{\lambda}}(n):=|\{\lambda_i \ | \ \lambda_i =n\}|$$
\end{definition}
We notice that $d$-partitions are in a sense the analogue of the well-studied partition of an integer (cf. \cite{A}); however this notion seems to be new in literature. While in this paper we are only interested in their applications to numerical semigroups, $d$-partitions might be an interesting tool in additive combinatorics. Therefore we conclude the section with the following motivating question.
\begin{question}\label{prob}
Let $d \in \mathbb{N} \setminus \{0\}$. Is it possible to characterize the set $\mathcal{P}(d)$ and give a formula for $|\mathcal{P}(d)|$?
\end{question}
\section{Main results}
Let $S$ be a numerical semigroup, $d \in \mathbb{N} \setminus \{0\}$ and let $G=\{g_1,\dots,g_{\nu(S)}\}$ be the set of its minimal generators. For $i=0,\dots,d-1$ denote by $G_i$ the set $$G_i := \{g \in G \ | \ g \equiv i \pmod{d} \}.$$
Notice that if $(\lambda_1,\dots,\lambda_m) \in \mathcal{P}(d)$ and $g_{\gamma_1},\dots,g_{\gamma_m}$ are such that $g_{\gamma_i} \in G_{\lambda_i}$ for all $i=1,\dots,m$, then $g_{\gamma_1}+\dots,g_{\gamma_m} \equiv \lambda_1+\dots+\lambda_m \equiv 0 \pmod{d}$, and therefore $\frac{g_{\gamma_1}+\dots,g_{\gamma_m}}{d} \in \mathbb{N}$.
Actually we are going to prove that all the generators of $\frac{S}{d}$ are of this form, by constructing a generating set for $\frac{S}{d}$:
\begin{proposition}\label{gen}
Let $S=\langle G \rangle$ be a numerical semigroup, and let $\vect{\lambda}=(\lambda_1,\dots,\lambda_m) \in \mathcal{P}(d)$. Let
$$\Gamma_{\vect{\lambda}}\left(\frac{S}{d}\right):=\left\{ \frac{g_{\gamma_1}+\dots+g_{\gamma_m}}{d} \ | \ g_{\gamma_i} \in G_{\lambda_i} \ \text{for all} \ i=1,\dots,m \right\}.$$
Then the set $$\Gamma\left(\frac{S}{d} \right):= \bigcup_{\vect{\lambda} \in \mathcal{P}(d)}  \Gamma_{\vect{\lambda}} \left(\frac{S}{d} \right)$$ is a generating system of $\frac{S}{d}$.
\end{proposition}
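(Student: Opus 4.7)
The plan is to show the inclusion $\Gamma(S/d) \subseteq S/d$ (which is essentially the observation made just before the statement, since the residues summing to $0 \pmod d$ forces divisibility) and then, for the nontrivial direction, express an arbitrary $x \in S/d$ as a sum of elements of $\Gamma(S/d)$.

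First I would take $x \in S/d$, so $dx \in S$. Since $G$ generates $S$, I can write $dx$ as a sum of minimal generators with multiplicity:
$$dx = g_{i_1} + g_{i_2} + \dots + g_{i_N},$$
where the $g_{i_s}$ are elements of $G$, not necessarily distinct. Setting $\lambda_s := [g_{i_s}]_d$ for $s = 1,\dots,N$, I have $g_{i_s} \in G_{\lambda_s}$ for every $s$, and the congruence $dx \equiv 0 \pmod d$ translates into $\lambda_1 + \dots + \lambda_N \equiv 0 \pmod d$.

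Next I would invoke Corollary \ref{split}: the sequence $(\lambda_1,\dots,\lambda_N)$ (which is already reduced mod $d$) can be partitioned into subsequences $\vect{\lambda}^{(1)},\dots,\vect{\lambda}^{(r)}$, each of which is an element of $\mathcal{P}(d)$. Let $I_1,\dots,I_r$ be the corresponding partition of indices; singletons $\{s\}$ with $\lambda_s = 0$ are handled by the trivial $d$-partition $(0)$, which contributes the generator $g_{i_s} \in G_0$ and hence the element $g_{i_s}/d \in \Gamma_{(0)}(S/d)$. For each block $I_t$, the sum $\sum_{s \in I_t} g_{i_s}$ is divisible by $d$ by construction, and the quotient
$$\frac{\sum_{s \in I_t} g_{i_s}}{d}$$
is by definition an element of $\Gamma_{\vect{\lambda}^{(t)}}(S/d) \subseteq \Gamma(S/d)$.

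Summing over $t$ gives
$$x \;=\; \frac{dx}{d} \;=\; \sum_{t=1}^{r} \frac{\sum_{s \in I_t} g_{i_s}}{d},$$
which exhibits $x$ as a sum (with repetition) of elements of $\Gamma(S/d)$, proving that $\Gamma(S/d)$ generates $S/d$. The main obstacle is really just bookkeeping: making sure Corollary \ref{split} is applied to the sequence of residues rather than to the generators themselves, and verifying that generators lying in $G_0$ are not lost but absorbed via the trivial $d$-partition. Once that is set up, the argument is a direct reduction of a generator-decomposition in $S$ to a $d$-partition decomposition of the residue sequence.
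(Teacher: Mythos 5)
Your proof is correct and follows essentially the same route as the paper: write $dx$ as a sum of generators, reduce the residue sequence mod $d$, split it into $d$-partitions via Corollary \ref{split}, and read off each block as an element of $\Gamma\left(\frac{S}{d}\right)$. Your explicit handling of the $G_0$ generators via the trivial $d$-partition is a detail the paper leaves implicit, but it is the same argument.
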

\begin{proof}
Consider $x \in \frac{S}{d}$. Then $dx \in S$, or equivalently  $dx=a_1g_1+\dots+a_{\nu(S)}g_{\nu(S)}$, hence $x=\frac{a_1g_1+\dots+a_{\nu(S)}g_{\nu(S)}}{d}.$ Since $x \in \mathbb{N}$ we have $$\sum_{j=1}^{\nu(S)} a_jg_j = \sum_{j=1}^{\nu(S)} \underbrace{(g_j+\dots+g_j)}_{a_j} \equiv 0 \pmod{d}$$ and then by Corollary \ref{split} the extended sequence $(\underbrace{[g_j]_d,\dots,[g_j]_d}_{a_j},\dots)$ is the union of some elements of $\mathcal{P}(d)$. Consider the sequences of generators associated to these elements of $\mathcal{P}(d)$. The sum of the elements of each of those sequences, divided by $d$, is an element of $\Gamma\left(\frac{S}{d}\right)$. Hence $x$ can be expressed as a linear combination of elements of $\Gamma\left(\frac{S}{d}\right)$, and the proof is concluded.
\end{proof}
\begin{remark}\label{notmin}
The statement of Proposition \ref{gen} actually holds even if $G$ is not the minimal system of generators of $S$. The only thing used in the proof is that $G$ is a generating system of $S$, regardless of its minimality: it's trivial to see that if we consider a non-minimal generating system $G$ we will obtain a larger generating set $\Gamma\left(\frac{S}{d}\right)$. However, to keep a lighter notation, we do not specify the generating system in $\Gamma\left(\frac{S}{d}\right)$ as in the following parts we will mostly use the minimal generating system for $S$. 
\end{remark} 
\begin{corollary}\label{genS}
Let $S$ be a numerical semigroup, let $\lambda,d \in \mathbb{N}$ be such that $\lambda < d$. Then $\frac{G_\lambda}{\gcd(\lambda,d)} \subseteq \Gamma\left(\frac{S}{d}\right)$. In particular, if $\gcd(\lambda,d)=1$, then $G_\lambda \subseteq \Gamma\left(\frac{S}{d}\right)$.
\end{corollary}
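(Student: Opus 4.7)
The plan is to exhibit, for each admissible value of $\lambda$, a single $d$-partition that witnesses the inclusion. The natural candidate is the constant sequence $\vect{\mu}=(\lambda,\lambda,\dots,\lambda)$ whose length is chosen just large enough to make the sum divisible by $d$.

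First I would dispose of the boundary case $\lambda=0$, where $\gcd(\lambda,d)=d$ and $\vect{\mu}$ reduces to the trivial $d$-partition $(0)$: any $g\in G_0$ is a multiple of $d$, so $g/d$ appears directly in $\Gamma_{(0)}\left(\frac{S}{d}\right)\subseteq\Gamma\left(\frac{S}{d}\right)$.

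For $1\le\lambda\le d-1$, set $e=\gcd(\lambda,d)$ and $d'=d/e$, and take $\vect{\mu}=(\lambda,\dots,\lambda)$ with $d'$ entries. Conditions (1) and (2) of Definition \ref{dpart} are immediate, since $0<\lambda\le d-1$ and $d'\lambda=d\cdot(\lambda/e)\equiv 0\pmod d$. Condition (3) is the only place a genuine check is needed: a proper subsequence consists of $1\le k<d'$ copies of $\lambda$, contributing $k\lambda$, and $d\mid k\lambda$ is equivalent to $d'\mid k(\lambda/e)$; since $\gcd(\lambda/e,d')=1$ this would force $d'\mid k$, contradicting $k<d'$. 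Hence $\vect{\mu}\in\mathcal{P}(d)$.

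Finally, for any $g\in G_\lambda$, choosing $g_{\gamma_1}=\dots=g_{\gamma_{d'}}=g$ in the definition of $\Gamma_{\vect{\mu}}\left(\frac{S}{d}\right)$ yields $(d'g)/d=g/e\in\Gamma_{\vect{\mu}}\left(\frac{S}{d}\right)\subseteq\Gamma\left(\frac{S}{d}\right)$, which is the desired inclusion $\frac{G_\lambda}{\gcd(\lambda,d)}\subseteq\Gamma\left(\frac{S}{d}\right)$; the parenthetical case $\gcd(\lambda,d)=1$ follows at once because then $e=1$. The only mildly technical step is ruling out a proper zero-sum subsequence in (3), and this reduces cleanly to the coprimality of $\lambda/e$ and $d/e$, so I do not anticipate any substantial obstacle.
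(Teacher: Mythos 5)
Your proposal is correct and follows the same route as the paper: both use the constant $d$-partition $(\lambda,\dots,\lambda)$ of length $d/\gcd(\lambda,d)$ applied to $d/\gcd(\lambda,d)$ copies of a single generator $g\in G_\lambda$. The only difference is that you spell out the verification of condition (3) of Definition \ref{dpart} via the coprimality of $\lambda/e$ and $d/e$, which the paper dismisses as obvious.
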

\begin{proof}
Take $g \in G_\lambda$. Obviously $(\overbrace{\lambda, \dots, \lambda}^{\frac{d}{\gcd(\lambda,d)}}) \in \mathcal{P}(d)$, therefore $$\frac{g}{\gcd(\lambda,d)}=\frac{\overbrace{g+\dots+g}^{\frac{d}{\gcd(\lambda,d)}}}{d} \in \Gamma\left(\frac{S}{d}\right)$$ and the claim follows.
\end{proof}
\begin{remark}
The set of generators $\Gamma\left(\frac{S}{d}\right)$ is not minimal in general. In fact, consider the numerical semigroup $S=\langle 7,9,13 \rangle$ and $d=3$. Since $G_1=\{7,13\}$ and $G_0=\{9\}$, Corollary \ref{genS} states that $\frac{G_0}{3}=\{3\}$ and $G_1=\{7,13\}$ are subsets of $\Gamma\left(\frac{S}{d}\right)$. However $13=7+3\cdot 2$, therefore $13$ is a linear combination of other elements of $\Gamma\left(\frac{S}{d}\right)$, thus it is not a minimal generator of $\frac{S}{d}$. 
\end{remark} 
Since $\Gamma\left(\frac{S}{d}\right)$ contains the minimal generators of $\frac{S}{d}$, we can use Proposition \ref{gen} to give upper bounds for $\nu\left(\frac{S}{d}\right)$. In the next Lemma, we use the convention that $\binom{n}{0}=1$ for all integers $n$ and $\binom{n}{k}=0$ if $n < k$ and $k > 0$.
\begin{lemma}\label{gammal}
Let $\vect{\lambda} \in \mathcal{P}(d)$. Then 
$$\left|\Gamma_{\vect{\lambda}}\left(\frac{S}{d}\right)\right| \le \prod_{i=0}^{d-1} \binom{|G_i|+\varphi_{\vect{\lambda}}(i)-1}{\varphi_{\vect{\lambda}}(i)}.$$
\end{lemma}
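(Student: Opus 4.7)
The idea is to observe that elements of $\Gamma_{\vect{\lambda}}\left(\frac{S}{d}\right)$ are sums (divided by $d$) that are symmetric in the chosen generators, so each such element is determined by a tuple of multisets, one multiset per residue class modulo $d$. Explicitly, fixing $\vect{\lambda} = (\lambda_1, \ldots, \lambda_m)$, a choice of $g_{\gamma_1} \in G_{\lambda_1}, \ldots, g_{\gamma_m} \in G_{\lambda_m}$ can be grouped by the value of $\lambda_j$: for each residue $i \in \{0, 1, \ldots, d-1\}$, exactly $\varphi_{\vect{\lambda}}(i)$ of the indices $j$ satisfy $\lambda_j = i$, and the corresponding generators are picked from $G_i$ (with repetition allowed). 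Since $g_{\gamma_1} + \cdots + g_{\gamma_m}$ depends only on which multiset of generators is chosen from each $G_i$, the element $(g_{\gamma_1} + \cdots + g_{\gamma_m})/d$ depends only on the tuple of multisets.

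The plan is therefore to first define a surjection from the set of tuples $(M_0, M_1, \ldots, M_{d-1})$, where $M_i$ is a multiset of size $\varphi_{\vect{\lambda}}(i)$ with elements in $G_i$, onto $\Gamma_{\vect{\lambda}}\left(\frac{S}{d}\right)$, by sending such a tuple to $\frac{1}{d}\sum_{i=0}^{d-1}\sum_{g \in M_i} g$. Surjectivity follows directly from the definition of $\Gamma_{\vect{\lambda}}\left(\frac{S}{d}\right)$. Consequently $\left|\Gamma_{\vect{\lambda}}\left(\frac{S}{d}\right)\right|$ is at most the number of such tuples, which by independence of the choices factors as $\prod_{i=0}^{d-1} N_i$, where $N_i$ is the number of multisets of size $\varphi_{\vect{\lambda}}(i)$ with elements from $G_i$.

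The standard stars-and-bars identity gives $N_i = \binom{|G_i| + \varphi_{\vect{\lambda}}(i) - 1}{\varphi_{\vect{\lambda}}(i)}$ whenever $|G_i| \ge 1$, yielding the desired bound. The bulk of the argument is genuinely routine; the only mildly delicate point is the handling of the corner cases dictated by the stated conventions, which I would check in a single short paragraph. If $|G_i| = \varphi_{\vect{\lambda}}(i) = 0$, there is one (empty) multiset and the convention $\binom{-1}{0} = 1$ matches. If $|G_i| = 0$ but $\varphi_{\vect{\lambda}}(i) > 0$, no valid multiset exists and hence $\Gamma_{\vect{\lambda}}\left(\frac{S}{d}\right) = \emptyset$; consistently the convention gives $\binom{\varphi_{\vect{\lambda}}(i) - 1}{\varphi_{\vect{\lambda}}(i)} = 0$, so the product vanishes and the inequality holds trivially. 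The remaining cases $|G_i| \ge 1$ are covered by the classical multiset coefficient formula.

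I do not anticipate any real obstacle; the only conceptual ingredient is the commutativity of addition, which lets us pass from ordered tuples $(g_{\gamma_1}, \ldots, g_{\gamma_m})$ to the coarser invariant $(M_0, \ldots, M_{d-1})$, and the bound then is an upper bound precisely because distinct such tuples of multisets may still produce the same element of $\frac{S}{d}$ after summing and dividing by $d$.
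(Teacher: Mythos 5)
Your proposal is correct and follows essentially the same route as the paper's proof: both count elements of $\Gamma_{\vect{\lambda}}\left(\frac{S}{d}\right)$ by the multisets of $\varphi_{\vect{\lambda}}(i)$ generators chosen with repetition from each $G_i$, and bound the cardinality by the product of the multiset coefficients. Your version merely spells out the surjection and the corner cases of the binomial convention more explicitly than the paper does.
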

\begin{proof}
From the definition of $\Gamma_{\vect{\lambda}}\left(\frac{S}{d} \right)$ it follows that every element of this set is associated to a combination of elements of the sets $G_i$, taking $\varphi_{\vect{\lambda}}(i)$ from each of them. The number of such combinations is the product of the number of combinations (with repetitions) of $\varphi_{\vect{\lambda}}(i)$ objects of $G_i$. The inequality is then a direct consequence of the formula for the number of combinations with repetitions.
\end{proof}
\begin{theorem}\label{gamma}
Let $S$ be a numerical semigroup, and $d \in \mathbb{N} \setminus \{0\}$. Then $$\nu\left(\frac{S}{d}\right) \le \sum_{\vect{\lambda} \in \mathcal{P}(d)} \prod_{i=0}^{d-1} \binom{|G_i|+\varphi_{\vect{\lambda}}(i)-1}{\varphi_{\vect{\lambda}}(i)}$$ and this bound is sharp.
\end{theorem}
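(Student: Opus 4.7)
The plan is to split the proof into two parts: the inequality and the sharpness assertion.

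For the upper bound, the key observation is that Proposition \ref{gen} already produces an explicit generating set $\Gamma\left(\frac{S}{d}\right)$ of $\frac{S}{d}$, so the minimal generating system of $\frac{S}{d}$ must be contained in $\Gamma\left(\frac{S}{d}\right)$. Hence $\nu\left(\frac{S}{d}\right) \le \left|\Gamma\left(\frac{S}{d}\right)\right|$. Since $\Gamma\left(\frac{S}{d}\right) = \bigcup_{\vect{\lambda} \in \mathcal{P}(d)} \Gamma_{\vect{\lambda}}\left(\frac{S}{d}\right)$, the union bound gives $\left|\Gamma\left(\frac{S}{d}\right)\right| \le \sum_{\vect{\lambda} \in \mathcal{P}(d)} \left|\Gamma_{\vect{\lambda}}\left(\frac{S}{d}\right)\right|$, and Lemma \ref{gammal} bounds each summand by $\prod_{i=0}^{d-1} \binom{|G_i| + \varphi_{\vect{\lambda}}(i) - 1}{\varphi_{\vect{\lambda}}(i)}$, which concludes the inequality.

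For sharpness, I would exhibit, for each admissible $d$ and each prescribed sequence $(|G_0|, \ldots, |G_{d-1}|)$, a numerical semigroup $S$ for which equality holds. The natural strategy is to fix the residue classes of the minimal generators of $S$ modulo $d$ and then to choose their magnitudes very generically, for example as a rapidly increasing (nearly geometric) sequence with $\gcd = 1$. One should then check: (i) the sets $\Gamma_{\vect{\lambda}}\left(\frac{S}{d}\right)$ for different $\vect{\lambda}\in\mathcal{P}(d)$ are pairwise disjoint; (ii) each $\Gamma_{\vect{\lambda}}\left(\frac{S}{d}\right)$ meets the combinatorial bound of Lemma \ref{gammal}, i.e., distinct multisets of generators with the prescribed residues give distinct sums; (iii) no element of $\Gamma\left(\frac{S}{d}\right)$ can be written as a non-trivial $\mathbb{N}$-linear combination of other elements of $\Gamma\left(\frac{S}{d}\right)$, so that every element is indeed a minimal generator of $\frac{S}{d}$.

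The main obstacle will be the sharpness argument, particularly point (iii). Preventing redundancies amounts to ruling out a finite list of non-trivial linear relations among the $g_j$ indexed by pairs of $d$-partitions with comparable enumeration functions, together with the ``diagonal'' relations coming from Corollary \ref{genS} (as the example $\langle 7,9,13\rangle$ with $d=3$ shows, redundancy can occur for innocuous-looking data, and must be explicitly excluded). I expect this to be handled by a genericity argument: once the residues of the generators modulo $d$ are fixed, the set of ``bad'' tuples $(g_1, \ldots, g_{\nu(S)})$ satisfying any of these forbidden relations forms a finite union of proper affine subspaces of the parameter space, and any sufficiently fast-growing choice of generators avoids all of them simultaneously.
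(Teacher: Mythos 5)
Your proof of the inequality is correct and is exactly the paper's argument: $\nu\left(\frac{S}{d}\right)\le\left|\Gamma\left(\frac{S}{d}\right)\right|$ by Proposition \ref{gen}, then the union bound over $\mathcal{P}(d)$ and Lemma \ref{gammal}. No issues there.

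The sharpness half, however, is where you diverge from the paper and where your argument has a genuine gap. First, you are attempting something much stronger than what is claimed: ``sharp'' here only requires exhibiting, for each $d$, one semigroup attaining equality, not one for every admissible profile $(|G_0|,\dots,|G_{d-1}|)$. The paper takes $S_d=\langle d+1,\,d^2,\,d^2+2d,\dots,d^2+(d-1)d\rangle$, for which $|G_0|=d-1$, $|G_1|=1$ and all other $G_i$ are empty; the right-hand side then collapses to $(d-1)+1=d$ because every $d$-partition other than $(0)$ and $(1,\dots,1)$ contributes a factor $\binom{\varphi_{\vect{\lambda}}(i)-1}{\varphi_{\vect{\lambda}}(i)}=0$, and one checks directly that $\frac{S_d}{d}=\{0,d,d+1,\dots\}$ has embedding dimension $d$. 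This sidesteps entirely the minimality analysis that your step (iii) requires. Second, your genericity argument is asserted rather than proved, and the assertion that the ``bad'' tuples form a finite union of proper affine subspaces needs real work on both counts: \emph{finiteness} requires bounding the coefficients appearing in a potential redundancy $x=\sum_j c_j y_j$ with $x,y_j\in\Gamma\left(\frac{S}{d}\right)$, which in turn requires controlling the ratio $\max\Gamma/\min\Gamma$ --- delicate precisely because of the small elements $\frac{g}{\gcd(\lambda,d)}$ produced by Corollary \ref{genS} (the source of the redundancy in the $\langle 7,9,13\rangle$ example); and \emph{properness} requires showing that no such relation holds identically in the $g_j$, which amounts to a multiset argument using condition (3) of Definition \ref{dpart}. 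Neither is carried out, and there is also no verification that a prescribed profile is realizable by a numerical semigroup at all (e.g.\ $d=2$ with $|G_1|=0$ forces $\gcd(G)$ even). As written, the sharpness claim is not established; replacing this program with the single explicit family above closes the gap immediately.
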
 
\begin{proof}
By Proposition \ref{gen}, the definition of $\Gamma\left(\frac{S}{d}\right)$ and Lemma \ref{gammal} we obtain 
$$\nu\left(\frac{S}{d}\right) \le \left|\Gamma\left(\frac{S}{d}\right)\right| \le \sum_{\vect{\lambda} \in \mathcal{P}(d)} \left|\Gamma_{\vect{\lambda}}\left(\frac{S}{d}\right) \right|\le \sum_{\vect{\lambda} \in \mathcal{P}(d)} \prod_{i=0}^{d-1} \binom{|G_i|+\varphi_{\vect{\lambda}}(i)-1}{\varphi_{\vect{\lambda}}(i)}$$ that proves the bound.

Consider now, for $d \in \mathbb{N} \setminus \{0\}$, the numerical semigroup $S_d=\langle d+1, d^2, d^2+2d,d^2+3d,\dots,d^2+(d-1)d \rangle$. In this case we have $|G_0|=d-1$, $|G_1|=1$ and $|G_i|=0$ otherwise. Then all the binomials associated to any $d$-partitions apart $(0)$ and $(1,1,\dots,1)$ are equal to zero, thus obtaining
$$\nu\left(\frac{S}{d}\right) \le \binom{|G_0|+1-1}{1}+\binom{|G_1|+d-1}{d}=d-1+1=d$$ But $\frac{S}{d}=\{0,d,d+1,\dots\}$, hence $\nu\left(\frac{S}{d}\right)=d$, and the bound is sharp.
\end{proof}
The bound given in Theorem \ref{gamma} is quite involved, as it depends on the partition $G_i$ and the functions $\varphi_{\vect{\lambda}}$.
However we can derive a sharp bound for $\nu\left(\frac{S}{d}\right)$ only in terms of the embedding dimension of $S$ and $d$.
\begin{theorem}\label{vs}
Let $S$ be a numerical semigroup, and let $d \in \mathbb{N} \setminus \{0\}$. Then
$$\nu\left(\frac{S}{d} \right) \le \binom{\nu(S)+d-1}{d}$$ and this bound is sharp.
\end{theorem}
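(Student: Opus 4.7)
The plan is to reduce the statement to a combinatorial inequality via Theorem~\ref{gamma} and then construct an injection into the set of multisets of size $d$ drawn from $G = \{g_1, \ldots, g_{\nu(S)}\}$. Write
\[
N := \sum_{\vect{\lambda} \in \mathcal{P}(d)} \prod_{i=0}^{d-1} \binom{|G_i| + \varphi_{\vect{\lambda}}(i) - 1}{\varphi_{\vect{\lambda}}(i)}.
\]
Each factor $\binom{|G_i| + \varphi_{\vect{\lambda}}(i) - 1}{\varphi_{\vect{\lambda}}(i)}$ counts the multisets of size $\varphi_{\vect{\lambda}}(i)$ drawn from $G_i$, so the product counts the multisets $M$ of elements of $G$ whose non-decreasing sequence of residues modulo $d$ equals $\vect{\lambda}$. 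Since the sorted residue sequence of a multiset is uniquely determined by the multiset, $N$ equals the total number of multisets $M$ of elements of $G$ whose sorted residue sequence belongs to $\mathcal{P}(d)$; by Proposition~\ref{pigeons} each such $M$ satisfies $|M| \le d$.

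Next I would define the padding map
\[
\Phi(M) := M \cup \underbrace{\{g_1, \ldots, g_1\}}_{d - |M| \text{ copies}},
\]
which lands in the set of multisets of size exactly $d$ from $G$; there are $\binom{\nu(S) + d - 1}{d}$ of these. The core step is to show that $\Phi$ is injective. Suppose $\Phi(M_1) = \Phi(M_2)$. Because $\Phi$ only appends copies of $g_1$, the multisets $M_1$ and $M_2$ share the multiplicity of every generator other than $g_1$; hence one is contained in the other as a multiset, say $M_1 \subseteq M_2$. If the containment were strict, the residue sequence of $M_1$ would be a proper subsequence of that of $M_2$ and would still sum to $0$ modulo $d$, contradicting condition~(3) of Definition~\ref{dpart} for the $d$-partition associated with $M_2$. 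Therefore $M_1 = M_2$, so $\Phi$ is injective, $N \le \binom{\nu(S) + d - 1}{d}$, and combining with Theorem~\ref{gamma} yields the claimed bound.

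For sharpness I would exhibit a family of semigroups attaining it. A natural choice is $S = \langle g_1, \ldots, g_\nu \rangle$ with all $g_i \equiv 1 \pmod{d}$: the only $d$-partition contributing nontrivially to $\Gamma(S/d)$ is then $(1, 1, \ldots, 1)$ of length $d$, so $\Gamma(S/d)$ is precisely the set of $\binom{\nu + d - 1}{d}$ quotients $(g_{i_1} + \cdots + g_{i_d})/d$. Picking the $g_i$ sufficiently spread out, for instance $g_i = 1 + d B^{i-1}$ with $B$ large and chosen so that $\gcd(g_1, \ldots, g_\nu) = 1$, ensures both that these values are pairwise distinct, by uniqueness of the base-$B$ representation of the sums, and that each of them is a minimal generator of $S/d$. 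The main obstacle is the minimality check: one must verify that no such quotient can be rewritten as a sum of two or more other elements of $S/d$, which reduces to ruling out any representation of the corresponding integer in $S$ by at least $2d$ generators from $G$, a numerical task handled by the base-$B$ structure.
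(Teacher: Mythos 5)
Your proof of the inequality is correct and is essentially the paper's argument: the paper also pads the tuples of generators attached to $d$-partitions with copies of $g_1$ to get an injection into the multisets of size $d$ from $G$ (its map $\sigma:\mathcal{X}\to\mathcal{C}$), and proves injectivity exactly as you do, by observing that equal paddings force one tuple to be contained in the other, violating condition (3) of Definition \ref{dpart}. Your rephrasing through the count $N$ of Theorem \ref{gamma} is harmless and if anything slightly cleaner. Note that both you and the paper implicitly read ``subsequence'' in condition (3) as ``proper nonempty subsequence,'' which is the only consistent reading.

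The sharpness half, however, has a genuine gap. Your construction $g_i = 1 + dB^{i-1}$ does guarantee that the $\binom{\nu+d-1}{d}$ quotients $(g_{i_1}+\dots+g_{i_d})/d = 1 + B^{i_1-1}+\dots+B^{i_d-1}$ are pairwise distinct (and, incidentally, this is more robust than the paper's choice $g_i = d^n-d-1+2^{i-1}d$, whose base-$2$ sums collide for $d\ge 4$: e.g.\ $2+2+2+2 = 1+1+2+4$). But you leave unproved both that $\{g_1,\dots,g_\nu\}$ is the \emph{minimal} generating set of $S$ and that every element of $\Gamma(S/d)$ is a minimal generator of $S/d$, and with your generators spread over $\nu$ orders of magnitude neither is a routine ``numerical task'': since all $g_i\equiv 1\pmod d$, a relation $g_j=\sum a_ig_i$ only requires $\sum a_i\equiv 1\pmod d$, so $\sum a_i\ge d+1$ is permitted, and $g_j\approx dB^{j-1}$ is far larger than $(d+1)g_1$, so such relations are not excluded by size; the same problem recurs for sums of $2d$ or more generators when checking minimality in $S/d$. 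The paper's key idea, which your proposal is missing, is to confine all generators to an interval $[m,2m)$: then $\min\Gamma(S/d)=g_1$ and $\max\Gamma(S/d)=g_\nu<2g_1$, so no element of $\Gamma(S/d)$ can be a sum of two nonzero elements of $S/d$, and minimality of both generating sets is immediate. A construction that repairs both proofs at once is, for instance, $g_i = dB^{\nu}-1+dB^{i-1}$ with $B>\max(d,1)$ and $B\not\equiv$ the residues forcing common factors: the constant term dominates so all generators lie in $[m,2m)$, while the quotients $dB^{\nu}-1+\sum_j B^{i_j-1}$ remain distinct by uniqueness of base-$B$ digits.
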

\begin{proof}
Consider the set $$\mathcal{C}=\{(a_1,\dots,a_d) \ | \ a_i \in G\}.$$ The elements of $\mathcal{C}$ are the sequences of elements of $G$ of lenght $d$. It is immediate that $|\mathcal{C}|=\binom{\nu(S)+d-1}{d}$. 
Define the set $$\mathcal{X}:=\{(g_{\gamma_1},\dots,g_{\gamma_m}) \ |  \ g_{\gamma_i} \in G_{\lambda_i} \ \text{for any} \ i=1,\dots,m, \ \ (\lambda_1,\dots,\lambda_m) \in \mathcal{P}(d) \}$$
Each element of $\Gamma\left(\frac{S}{d}\right)$ is associated to at least one element of $\mathcal{X}$, thus $\left|\Gamma\left(\frac{S}{d}\right)\right| \le |\mathcal{X}|$. Consider now the function $\sigma: \mathcal{X} \rightarrow \mathcal{C}$ defined by $$\sigma((g_{\gamma_1},\dots,g_{\gamma_m}))=(g_{\gamma_1},\dots,g_{\gamma_m},g_1,\dots,g_1).$$ Note that this function is well defined by Proposition \ref{pigeons}. Moreover, if there exist $(g_{\gamma_1},\dots,g_{\gamma_m})$ and $(g_{\gamma_1},\dots,g_{\gamma_k})$ such that $\sigma((g_{\gamma_1},\dots,g_{\gamma_m}))=\sigma((g_{\gamma_1},\dots,g_{\gamma_k}))$ then we must have that one of the two sequences is contained in the other one, thus leading to a contradiction. Then $\sigma$ is injective, and $|\mathcal{X}| \le |\mathcal{C}|$. Finally, from Proposition \ref{gen} we obtain $$\nu\left(\frac{S}{d}\right) \le \left|\Gamma\left(\frac{S}{d}\right)\right| \le |\mathcal{X}| \le |\mathcal{C}| = \binom{\nu(S)+d-1}{d}$$ and the bound is proven.

For the sharpness of this bound, take $n \ge 2$, $d \ge 2$ and consider the following family of sets $$G_{n,d}:=\{g_i:=d^n-d-1+2^{i-1}d \ | \ i=1,\dots,n\}.$$
Notice that $g_1=d^n-1$, $g_2=d^n+d-1$ and $\gcd(g_1,g_2)=1$, thus we can build 
the family of numerical semigroups $S_{n,d}:=\langle G_{n,d} \rangle$. Since $$\max G_{n,d}=d^n-d-1+2^{n-1}d \le 2d^n-d-1 < 2\cdot d^n-2 = 2 \cdot \min G_{n,d}$$  $G_{n,d}$ is the minimal generating system of $S_{n,d}$ and $\nu(S_{n,d})=|G_{n,d}|=n$. Since all elements of $G_{n,d}$ are equal to $d-1$ modulo $d$ then $(G_{n,d})_{d-1}=G_{n,d}$. Thus, when constructing $\Gamma\left(\frac{S_{n,d}}{d}\right)$ we only have to consider the $d$-partition $(d-1,d-1,\dots,d-1)$, then
$$\Gamma\left(\frac{S_{n,d}}{d}\right)=\left\{ \frac{g_{\gamma_1}+\dots+g_{\gamma_d}}{d} \ | \ g_{\gamma_i} \in G_{n,d} \ \text{for any} \ i=1,\dots,d \right\}.$$
But since $$\frac{g_{\gamma_1}+\dots+g_{\gamma_d}}{d}=d^n-d-1+2^{\gamma_1-1}+2^{\gamma_2-1}+\dots+2^{\gamma_d-1}$$ we deduce that different combinations of generators give different elements in $\Gamma\left(\frac{S_{n,d}}{d}\right)$, thus $$\left|\Gamma\left(\frac{S_{n,d}}{d}\right)\right|=\binom{n-d+1}{d}=\binom{\nu(S)-d+1}{d}.$$ We only need to show that $\Gamma\left(\frac{S_{n,d}}{d}\right)$ is the minimal system of generators of $\frac{S_{n,d}}{d}$: noticing that $$\min \Gamma\left(\frac{S_{n,d}}{d}\right)=\frac{g_1+\dots+g_1}{d}=g_1  \ \ \text{and} \ \ \max \Gamma\left(\frac{S_{n,d}}{d}\right)=\frac{g_n+\dots+g_n}{d}=g_n$$ then $\max G_{n,d}=g_n< 2g_1 < 2 \cdot \min G_{n,d}$, hence all the generators of $\Gamma\left(\frac{S_{n,d}}{d}\right)$ are minimal.
\end{proof}
Remark that the semigroups $S_{n,d}$ used in Theorem \ref{vs} actually satisfy both equalities of Theorem \ref{gamma} and \ref{vs}.
\section{Applications}
We conclude the paper by showing that Proposition \ref{gen} can be used to give alternative proofs of some known in literature. The first application regards proportionally modular semigroups (cf. \cite{RU}).
Notice that Proposition \ref{gen} is particularly helpful if the semigroup we consider has ``small" embedding dimension. In fact, the $d$-partitions that are actually involved with elements of $\Gamma\left(\frac{S}{d}\right)$ are formed by the $0 \le \lambda_i \le d-1$ such that $G_{\lambda_i} \neq \emptyset$. By applying Proposition \ref{gen} we can prove the three Corollaries 18,19 and 20 from \cite{RU}. We show a proof of Corollary 19: the other two are very similar.
\begin{corollary}[\protect{\cite[Corollary 19]{RU}}]
Let $n_1$ and $n_2$ be positive integers such that $n_1,n_2$ and $3$ are pairwise relatively prime, and let $S=\langle n_1,n_2 \rangle$. Then:
\begin{enumerate}
\item If $n_1+n_2 \equiv 0 \pmod{3}$ then $\frac{S}{3}=\left \langle n_1,n_2,\frac{n_1+n_2}{3} \right \rangle$.
\item If $n_1+2n_2 \equiv 0 \pmod{3}$ then $\frac{S}{3}=\left \langle n_1,n_2,\frac{n_1+2n_2}{3},\frac{2n_1+n_2}{3} \right \rangle$.
\end{enumerate}
\end{corollary}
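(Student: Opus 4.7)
The plan is to apply Proposition \ref{gen} directly with $d=3$, using the explicit description $\mathcal{P}(3)=\{(0),(1,1,1),(1,2),(2,2,2)\}$ computed in the Example following Definition \ref{dpart}. Since $\gcd(n_1,3)=\gcd(n_2,3)=1$, the minimal generating set $G=\{n_1,n_2\}$ satisfies $G_0=\emptyset$, and every $n_i$ lies in $G_1\cup G_2$. Consequently, only those $3$-partitions supported on the indices $i$ with $G_i\neq\emptyset$ can contribute a nonempty $\Gamma_{\vect{\lambda}}(S/3)$, and the proof reduces to a short case analysis on the residues of $n_1,n_2$ modulo $3$.

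In Case (1), the hypothesis $n_1+n_2\equiv 0\pmod 3$ forces $n_1\not\equiv n_2\pmod 3$, so (up to relabeling) $G_1=\{n_1\}$ and $G_2=\{n_2\}$, each a singleton. The partitions $(1,1,1)$, $(2,2,2)$, and $(1,2)$ each admit a unique choice of generators and contribute respectively $\tfrac{3n_1}{3}=n_1$, $\tfrac{3n_2}{3}=n_2$, and $\tfrac{n_1+n_2}{3}$ to $\Gamma(S/3)$. Together with $0$ from the trivial partition, Proposition \ref{gen} yields $\frac{S}{3}=\langle n_1,n_2,\tfrac{n_1+n_2}{3}\rangle$.

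In Case (2), the congruence $n_1+2n_2\equiv 0\pmod 3$ forces $n_1\equiv n_2\pmod 3$, so both generators lie in the same $G_i$ (with $i=1$ or $i=2$) and the other of $G_1,G_2$ is empty. Only the partition $(1,1,1)$ or $(2,2,2)$ contributes nontrivially, and its contribution consists of sums $\tfrac{g_{\gamma_1}+g_{\gamma_2}+g_{\gamma_3}}{3}$ over the multiset combinations with repetition of $\{n_1,n_2\}$ taken three at a time. Enumerating these four combinations gives exactly $n_1$, $n_2$, $\tfrac{2n_1+n_2}{3}$, $\tfrac{n_1+2n_2}{3}$, and Proposition \ref{gen} yields the stated generating set.

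There is no real obstacle beyond the residue bookkeeping: once one observes that $G_0=\emptyset$ and identifies which of $G_1,G_2$ are nonempty in each case, the list of $3$-partitions forces the generating set. Note that the statement only claims equality of semigroups, not minimality of the displayed generators, so we do not need to rule out redundant generators (and indeed, as Remark after Corollary \ref{genS} illustrates, $\Gamma(S/d)$ is not minimal in general).
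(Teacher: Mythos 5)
Your proof is correct and follows essentially the same route as the paper: apply Proposition \ref{gen} with the explicit list $\mathcal{P}(3)=\{(0),(1,1,1),(1,2),(2,2,2)\}$ and sort $n_1,n_2$ into $G_1,G_2$ according to their residues, which is exactly the paper's case analysis. The only (harmless) slip is attributing a contribution of $0$ to the trivial partition $(0)$: since $G_0=\emptyset$ here, $\Gamma_{(0)}\left(\frac{S}{3}\right)$ is in fact empty, but this does not affect the generated semigroup.
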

\begin{proof}
First of all we know that $\mathcal{P}(3)=\{(0),(1,1,1),(1,2),(2,2,2)\}$. We construct $\Gamma\left(\frac{S}{3}\right)$ in both cases, and the thesis will follow from Proposition \ref{gen}:
\begin{enumerate}
\item If $n_1+n_2 \equiv 0 \pmod{3}$ then we can suppose without loss of generality that $n_1 \equiv 2 \pmod{3}$ and $n_2 \equiv 1 \pmod{3}$. Therefore $$\Gamma\left(\frac{S}{3}\right)=\left\{\frac{n_1+n_1+n_1}{3},\frac{n_2+n_2+n_2}{3},\frac{n_1+n_2}{3}\right\}=\left\{n_1,n_2,\frac{n_1+n_2}{3}
\right\}.$$
\item If $n_1+2n_2 \equiv 0 \pmod{3}$ then $n_1 \equiv n_2 \pmod{3}$. Suppose that $n_1 \equiv 1 \pmod{3}$. Hence $G_1=\{n_1,n_2\}$, and the only $3$-partition we have to considerate is $(1,1,1)$. Then $$\Gamma\left(\frac{S}{3}\right)=\left\{\frac{n_1+n_1+n_1}{3},\frac{n_2+n_2+n_2}{3},\frac{n_1+n_1+n_2}{3},\frac{n_1+n_2+n_2}{3}\right\}$$ that is our claim. The case $n_1 \equiv 2 \pmod{3}$ is identical. 
\end{enumerate}
\end{proof}
The second application is related to symmetric numerical semigroups.
Since a numerical semigroup has finite complement in $\mathbb{N}$ we define the \textbf{Frobenius number} of $S$ as the greatest element in $\mathbb{N} \setminus S$, denoted as $F(S)$. We say that a numerical semigroup is \textbf{symmetric} if for every $z \in \mathbb{Z}$ we have either $z \in S$ or $F(S)-z \in S$. \begin{proposition}[\cite{R2}]\label{T}
Let $S$ be a numerical semigroup such that $F(S)$ is odd.
Then the set $$T := S \cup \left\{ x \in \mathbb{N} \setminus \{0\} | x \ge \frac{F(S)}{2}, \ F(S)-x \not \in S \right\}$$ is a symmetric numerical semigroup such that $F(S)=F(T)$.
\end{proposition}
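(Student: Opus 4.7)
The plan is to verify three things in order: that $T$ is closed under addition (the other numerical-semigroup axioms being immediate, since $S \subseteq T$ forces $0 \in T$ and $\mathbb{N} \setminus T \subseteq \mathbb{N} \setminus S$ is finite), that $F(T) = F(S)$, and finally the symmetry condition. Throughout, the crucial use of the hypothesis is that $F(S)$ odd forces $F(S)/2 \notin \mathbb{Z}$; in particular, any $y \in T \setminus S$ satisfies the strict inequality $2y \geq F(S)+1$.

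For closure under addition, I would split into cases according to whether $x, y$ lie in $S$ or in $T \setminus S$. The case $x, y \in S$ is immediate. If $x, y \in T \setminus S$, then $2x, 2y \geq F(S)+1$, so $x+y > F(S)$ and hence $x+y \in S \subseteq T$. The delicate case is $x \in S$ and $y \in T \setminus S$: here $x+y \geq y \geq F(S)/2$ automatically, so one only needs to check that either $x+y \in S$ or $F(S)-(x+y) \notin S$. Suppose for contradiction that $x+y \notin S$ and $F(S) - (x+y) \in S$; adding $x \in S$ gives $F(S) - y \in S$, contradicting $y \in T \setminus S$. This contradiction argument is, I expect, the main technical content of the proof, since it is the one place where the semigroup structure of $S$ is really used.

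For the Frobenius number, note that $F(S) \notin T$: indeed $F(S) \notin S$ by definition, and $F(S) - F(S) = 0 \in S$ excludes $F(S)$ from the added set. On the other hand, every integer $n > F(S)$ lies in $S \subseteq T$, so $F(T) = F(S)$. For symmetry, take $z \in \mathbb{Z}$. If $z \leq 0$ then $z=0 \in S$ or $F(S) - z > F(S)$ lies in $S$; if $z > F(S)$ then $z \in S$. In the remaining range $1 \leq z \leq F(S)$, if either $z$ or $F(S)-z$ lies in $S$ we are done, so assume neither does; then by the parity of $F(S)$ we have $z \neq F(S)/2$, and exactly one of the two numbers exceeds $F(S)/2$. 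That one is a nonzero integer satisfying both defining conditions of $T \setminus S$, and hence lies in $T$. This exhausts every case and completes the proof.
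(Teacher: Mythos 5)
Your proof is correct and complete. Note, however, that the paper does not prove this proposition at all: it is stated with a citation to \cite{R2} and used as a black box, so there is no in-paper argument to compare against. Your verification supplies exactly what such a proof needs: the key observation that $F(S)$ odd forces every $y\in T\setminus S$ to satisfy $2y\ge F(S)+1$ (handling the case of two new elements), the contradiction $F(S)-(x+y)\in S\Rightarrow F(S)-y\in S$ for the mixed case (the only place the semigroup structure of $S$ enters), the check that $F(S)\notin T$ because $F(S)-F(S)=0\in S$, and the symmetry argument using that $z=F(S)/2$ is excluded by parity. This is the standard direct argument for the construction, and every case is accounted for.
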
  
We will use this result to give an alternative proof of the following:
\begin{corollary}[\protect{\cite[Theorem 5]{S}}]\label{sym}
Let $S$ be a numerical semigroup with minimal system of generators $\{g_1,\dots,g_n\}$ and let $d \in \mathbb{N} \setminus \{0\}$ be such that $d \ge 2$. Then there exist infinitely many symmetric numerical semigroups $T$ such that $S=\frac{T}{d}$.
\end{corollary}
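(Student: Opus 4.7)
The plan is to construct $T$ as a symmetric extension of a simpler auxiliary semigroup $R$ that already satisfies $R/d = S$, using Proposition \ref{T}, and then to vary the auxiliary data to produce infinitely many such $T$.

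For each integer $k \geq F(S)$, set $M_k := 2kd + 2$ and define
$$R_k := dS \cup \{m \in \mathbb{N} : m \geq M_k\}.$$
First I would verify that $R_k$ is a numerical semigroup (closed under addition, with finite complement) and that its Frobenius number is $F(R_k) = M_k - 1 = 2kd + 1$: this integer is not a multiple of $d$ since $d \geq 2$, so $M_k - 1 \notin dS$, while every larger integer lies in $R_k$. In particular $F(R_k)$ is odd. A short check also shows $R_k/d = S$: if $dx \in R_k$ then either $dx \in dS$, forcing $x \in S$, or $dx \geq M_k$, forcing $x \geq 2k+1 > F(S)$ and hence $x \in S$; the reverse inclusion is immediate from $dS \subseteq R_k$.

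Since $F(R_k)$ is odd, Proposition \ref{T} applied to $R_k$ produces a symmetric numerical semigroup $T_k \supseteq R_k$ with $F(T_k) = F(R_k)$. To conclude $T_k/d = S$, the inclusion $S = R_k/d \subseteq T_k/d$ is immediate from $R_k \subseteq T_k$. For the reverse, suppose $x \in T_k/d \setminus S$; then $dx \in T_k \setminus R_k$, and the description of $T_k$ in Proposition \ref{T} forces $dx \geq F(R_k)/2$. But $x$ must be a gap of $S$, so $x \leq F(S)$, giving $dx \leq dF(S) < (2kd+1)/2 = F(R_k)/2$ since $k \geq F(S)$, a contradiction. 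Thus $T_k/d = S$, and because $F(T_k) = 2kd+1$ is strictly increasing in $k$ the semigroups $\{T_k\}_{k \geq F(S)}$ are pairwise distinct, producing infinitely many.

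The main (and rather minor) obstacle lies in the bookkeeping for $R_k$: tuning $M_k$ so that $F(R_k)$ is simultaneously odd, lies outside $dS$, and strictly exceeds $2dF(S)$, the last condition being exactly what prevents any nonzero gap of $S$ from being dragged into $T_k$ by multiplication by $d$. Beyond that, everything reduces to invoking Proposition \ref{T} and comparing sizes.
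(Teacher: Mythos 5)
Your proof is correct. Your auxiliary semigroup $R_k = dS \cup \{m \in \mathbb{N} : m \ge M_k\}$ is exactly the paper's $S_\rho = \langle dg_1,\dots,dg_n \rangle \cup \{\rho+1,\rho+2,\dots\}$ for the particular choice $\rho = M_k-1 = 2kd+1$ (note $\langle dg_1,\dots,dg_n\rangle = dS$, and your $\rho$ is odd, prime to $d$, and exceeds $2dF(S)$, as the paper requires), and both arguments then pass to a symmetric $T$ via Proposition \ref{T} and count distinct semigroups by their distinct Frobenius numbers. Where you genuinely diverge is in verifying $\frac{T}{d}=S$: the paper writes down an explicit non-minimal generating set $\mathcal{G}$ of $T_\rho$, applies Proposition \ref{gen} together with Remark \ref{notmin}, and observes that every element of $\Gamma\left(\frac{T_\rho}{d}\right)$ other than $g_1,\dots,g_n$ involves a generator larger than $dF(S)$ and therefore already lies in $S$; you instead argue element-wise, showing that any $x \in \frac{T_k}{d} \setminus S$ would have to satisfy both $dx \ge F(R_k)/2 > kd$ and $dx \le dF(S) \le kd$, a contradiction. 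Your route is shorter and more elementary, needing nothing beyond Proposition \ref{T} and the size comparison encoded in $k \ge F(S)$; what it gives up is precisely the point of this section of the paper, namely illustrating that the description of the generators of a fraction (Proposition \ref{gen}) lets one read off $\frac{T}{d}$ directly from the shape of the generators of $T$. The only blemish is the degenerate case $S = \mathbb{N}$ (where $F(S)=-1$ and $k=-1$ makes $M_k \le 0$); starting the family at $k \ge \max\{1,F(S)\}$ disposes of it.
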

\begin{proof}

Take $\rho \in \mathbb{N} \setminus \{0\}$ such that $\rho$ is an odd integer not multiple of $d$ and $\rho > 2dF(S)$. The numerical semigroup $$S_{\rho} := \langle dg_1,\dots,dg_n \rangle \cup \{\rho+1,\rho+2, \dots \}$$ is such that $F(S_{\rho})=\rho$ is odd. Therefore by Proposition \ref{T} the set $$T_{\rho}:= S_{\rho} \cup \left\{x \in \mathbb{N} \setminus \{0\} \ | \ x \ge \frac{F(S_{\rho})}{2}, \  F(S_{\rho})-x \not \in S_{\rho} \right\}$$ is a symmetric numerical semigroup such that $F(T_\rho)=F(S_{\rho})=\rho$.
A generating set of $T_{\rho}$ is $$\mathcal{G}=\{dg_1,\dots,dg_n, \rho_i+1, \rho_i+2,\dots, 2\rho+1,x_1,\dots, x_k\}$$ where $x_1,\dots,x_k$ are such that $x_j \ge \frac{\rho}{2}$ and $\rho - x_j \not \in S_{\rho}$ for any $j=1,\dots, k$. Since $x_j \ge \frac{\rho}{2} > dF(S)$ for any $j=1,\dots,k$, all elements of $\mathcal{G}$ aside from $g_1,\dots,g_n$ are greater than $dF(S)$.
Applying Proposition \ref{gen} and Remark \ref{notmin} to this generating set we obtain that $$\Gamma\left(\frac{T_{\rho}}{d}\right) =\left\{ \frac{\omega_{\gamma_1}+\dots+\omega_{\gamma_m}}{d} \ | \ \omega_{\gamma_i} \in G_{\lambda_i} \ \text{for all} \ (\lambda_1,\dots,\lambda_m) \in \mathcal{P}(d), \ i=1,\dots,m \right\}$$ is a generating system of $\frac{T_{\rho}}{d}$.  Naturally $\{dg_1,\dots,dg_n\} \subseteq \mathcal{G}_0$, hence $\{g_1,\dots,g_n\} \subseteq \Gamma\left(\frac{T_\rho}{d}\right)$. 

If $\omega \in \Gamma\left(\frac{T_{\rho}}{d}\right) \setminus \{g_1,\dots,g_n\}$, since $\{dg_1,\dots,dg_n\} \subseteq \mathcal{G}_0$ we must have that $\omega$ is of the form $\omega=\frac{\omega_{\gamma_1}+\dots+\omega_{\gamma_m}}{d}$ where at least one (actually all) of the $\omega_{\gamma_i}$ is in $\mathcal{G} \setminus \{dg_1,\dots,dg_n\}$: this directly implies $\omega_{\gamma_i} > dF(S)$ and $\omega > F(S)$. Then all elements of $\Gamma\left(\frac{T_{\rho}}{d}\right) \setminus \{g_1,\dots,g_n\}$ are greater than $F(S)$, thus are linear combination of $g_1,\dots,g_n$.  But $\{g_1,\dots,g_n\}$ is the minimal set of generators of $S$, hence $\frac{T_{\rho}}{d}=S$. Since there are infinitely many choices of $\rho$, we are done.
\end{proof}
Moreover, considering even values of $\rho$ (taking apart the case $d=2$) and with slight modifications of this proof and Proposition \ref{T} we can prove the following:
\begin{corollary}[\protect{\cite[Theorem 6]{S}}]
Let $S$ be a numerical semigroup, and let $d \in \mathbb{N} \setminus \{0\} , d \ge 3$. Then there exist infinitely many pseudo-symmetric numerical semigroups $T$ such that $S=\frac{T}{d}$.
\end{corollary}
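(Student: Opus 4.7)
The plan is to parallel the argument of Corollary \ref{sym}, with two modifications: replace the symmetric envelope construction of Proposition \ref{T} by a pseudo-symmetric analogue that applies when the Frobenius number is \emph{even}, and choose the auxiliary integer $\rho$ to be even rather than odd. Concretely, I would first establish the following version of Proposition \ref{T}: if $S$ is a numerical semigroup with $F(S)$ even, then
$$T := S \cup \left\{ x \in \mathbb{N} \setminus \{0\} \ \Big| \ x > \frac{F(S)}{2}, \ F(S)-x \notin S \right\}$$
is a pseudo-symmetric numerical semigroup with $F(T) = F(S)$. The strict inequality ensures $F(S)/2$ remains a gap, which is exactly what distinguishes the pseudo-symmetric case from the symmetric one of Proposition \ref{T}.

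With this tool in hand, I would pick $\rho \in \mathbb{N} \setminus \{0\}$ to be even, not a multiple of $d$, and larger than $2dF(S)$. Here is where the hypothesis $d \ge 3$ is used: for $d=2$ every even integer is a multiple of $d$, so no such $\rho$ exists, whereas for $d \ge 3$ there are infinitely many. Setting
$$S_\rho := \langle dg_1, \dots, dg_n \rangle \cup \{\rho+1, \rho+2, \dots\},$$
one checks as in the proof of Corollary \ref{sym} that $F(S_\rho) = \rho$ is even, since $\rho \notin d\mathbb{N}$ guarantees $\rho \notin \langle dg_1, \dots, dg_n \rangle \subseteq d\mathbb{N}$. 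The pseudo-symmetric analogue above then produces a pseudo-symmetric numerical semigroup $T_\rho \supseteq S_\rho$ with $F(T_\rho) = \rho$.

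A generating set of $T_\rho$ has the form
$$\mathcal{G} = \{dg_1, \dots, dg_n\} \cup \{\rho+1, \dots, 2\rho+1\} \cup \{x_1, \dots, x_k\},$$
with every element of $\mathcal{G} \setminus \{dg_1, \dots, dg_n\}$ exceeding $dF(S)$ because $F(S_\rho)/2 = \rho/2 > dF(S)$. Plugging $\mathcal{G}$ into Proposition \ref{gen} via Remark \ref{notmin}, any element of $\Gamma\!\left(\frac{T_\rho}{d}\right) \setminus \{g_1, \dots, g_n\}$ must come from a $d$-partition that uses at least one generator in $\mathcal{G} \setminus \{dg_1, \dots, dg_n\}$, and is therefore strictly greater than $F(S)$, hence a combination of $\{g_1, \dots, g_n\}$. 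Varying $\rho$ over the infinite admissible set yields infinitely many pseudo-symmetric $T_\rho$ with $\frac{T_\rho}{d} = S$. The main technical obstacle is the first step: verifying that the proposed $T$ is indeed closed under addition (so that it is a numerical semigroup) and that $F(S)/2$ remains the unique additional pseudo-Frobenius gap; once this is in place, the remainder of the argument is a nearly verbatim adaptation of Corollary \ref{sym}.
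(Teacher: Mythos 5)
Your proposal follows exactly the route the paper indicates (the paper only sketches this corollary, saying to take $\rho$ even, exclude $d=2$, and modify Proposition \ref{T}): you correctly supply the pseudo-symmetric analogue of Proposition \ref{T} with the strict inequality $x > F(S)/2$ keeping $F(S)/2$ a gap, correctly locate where $d\ge 3$ is needed, and the rest is the verbatim adaptation of Corollary \ref{sym} that the author intends. This is the same approach, with the omitted details filled in correctly.
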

The main idea behind the proof of Corollary \ref{sym} is to construct a convenient symmetric numerical semigroup $T$ depending on $\rho$ such that all elements of $\Gamma\left(\frac{T}{d}\right) \setminus \{g_1,\dots,g_n\}$ are forced to be in $S$, therefore implying $\frac{T}{d}=S$. This idea allows us to use the direct expression of $T$ without resorting to other definitions (the original proof mainly used properties of $d$-symmetric numerical semigroups), just by looking at the shape of the generators of $\frac{T}{d}$ when $\rho$ is larger than a certain bound. Furthermore, it's interesting to see that the numerical semigroup $T$ is not related to the pseudo-Frobenius numbers of $S$, which are used in the original proof to state that the numerical semigroup presented is symmetric.
\section*{Acknowledgements}
I would like to thank Alessio Sammartano for his helpful comments and suggestions on this work.

\end{document}